\newtheorem{theorem}{Theorem}[section]
\newtheorem{thmy}{Theorem}
\newtheorem{lemma}[theorem]{Lemma}
\newtheorem{corollary}[theorem]{Corollary}
\newtheorem{conjecture}[theorem]{Conjecture}
\def\barr{\begin{array}}
\def\earr{\end{array}}
\title{A criterion for nilpotency of a finite group by the sum of element orders}
\author{Marius T\u arn\u auceanu}
\date{March 23, 2019}
\begin{document}

\maketitle

\begin{abstract}
Denote the sum of element orders in a finite group $G$ by $\psi(G)$ and let $C_n$ denote the cyclic group
of order $n$. In this paper, we prove that if $|G|=n$ and $\psi(G)>\frac{13}{21}\,\psi(C_n)$, then $G$ is
nilpotent. Moreover, we have $\psi(G)=\frac{13}{21}\,\psi(C_n)$ if and only if $n=6m$ with $(6,m)=1$ and
$G\cong S_3\times C_m$. Two interesting consequences of this result are also presented.
\end{abstract}

{\small
\noindent
{\bf MSC2000\,:} Primary 20D60; Secondary 20D15, 20F18.

\noindent
{\bf Key words\,:} group element orders, nilpotent groups.}

\section{Introduction}
Given a finite group $G$, we consider the functions
\begin{equation}
\psi(G)=\sum_{x\in G}o(x) \mbox{ and } \psi'(G)=\frac{\psi(G)}{\psi(C_{|G|})}\,,\nonumber
\end{equation}where $o(x)$ denotes the order of $x$. In \cite{1}, H. Amiri, S.M. Jafarian Amiri and I.M. Isaacs proved the following theorem:

\begin{thmy}
If $G$ is a finite group, then
\begin{equation}
\psi'(G)\leq 1,\nonumber
\end{equation}and we have equality if and only if $G$ is cyclic.
\end{thmy}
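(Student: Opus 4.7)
The plan is to prove $\psi(G) \leq \psi(C_n)$ by induction on $n = |G|$, combined with Frobenius's theorem on solution counts. Rewriting both sides via the element-order distribution $\alpha_d(G) := |\{x \in G : o(x) = d\}|$, one has
\[
\psi(G) = \sum_{d \mid n} d\,\alpha_d(G), \qquad \psi(C_n) = \sum_{d \mid n} d\,\varphi(d).
\]
The central input is Frobenius's theorem: for every $d \mid n$, the count $L_d(G) := \sum_{e \mid d} \alpha_e(G) = |\{x \in G : x^d = 1\}|$ is divisible by $d$, so $L_d(G) \geq d = L_d(C_n)$, with equality forced at $d = n$. Heuristically, the element-order distribution of $G$ is shifted toward low orders compared with that of $C_n$, and the task is to convert this into the opposite inequality on the weighted sum $\sum_d d\,\alpha_d$.

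In the prime-power case $n = p^k$ the divisor lattice is a chain, and Abel summation turns the Frobenius bound directly into the conclusion: one checks
\[
\sum_{i=0}^k p^i \alpha_{p^i}(G) = p^k L_{p^k}(G) - (p-1)\sum_{i=0}^{k-1} p^i L_{p^i}(G),
\]
with the same identity for $C_{p^k}$, so $\psi(G) - \psi(C_{p^k}) = -(p-1)\sum_{i<k} p^i (L_{p^i}(G) - L_{p^i}(C_{p^k})) \leq 0$. For general $n$ I would induct, reducing to a proper subgroup. In the solvable case, take a maximal subgroup $M$ of prime index $p$; then $G/M \cong C_p$ forces $o(y) = p \cdot o(y^p)$ for every $y \in G \setminus M$, giving
\[
\psi(G) = \psi(M) + p \sum_{y \in G \setminus M} o(y^p).
\]
Applying the inductive hypothesis to $M$ together with a Frobenius-style analysis of the fibers of $y \mapsto y^p$ should close the inequality $\psi(G) < \psi(C_n)$.

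The hard part has two aspects. Combinatorially, for non-prime-power $n$ the divisor lattice of $n$ is not a chain and direct Abel summation breaks down (the M\"obius coefficients $e \prod_{q \mid n/e}(1-q)$ have mixed signs), so the reduction via proper subgroups to the chain-like case is essential, but one must bound the coset sum $\sum_{y \notin M} o(y^p)$ with care. Structurally, the nonsolvable case has no subgroup of prime index, so one must replace the maximal-subgroup step by analysis through a minimal normal subgroup $N$ (a direct product of non-abelian simple groups) and handle coset contributions by direct computation on composition factors. For the equality characterization, tracing through the inductive inequalities shows $\psi(G) = \psi(C_n)$ forces $L_d(G) = d$ for every $d \mid n$, hence $\alpha_d(G) = \varphi(d)$ throughout; in particular $G$ has an element of order $n$, so $G$ is cyclic.
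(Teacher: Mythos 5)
The statement is Theorem A, which the paper quotes from \cite{1} without proof; the relevant comparison is therefore with the argument of Amiri--Jafarian Amiri--Isaacs, which the present paper relies on through Lemma 1.4, 4). Your prime-power case is correct and complete: the Abel-summation identity $\sum_{i\le k} p^i\alpha_{p^i}=p^kL_{p^k}-(p-1)\sum_{i<k}p^iL_{p^i}$ checks out, Frobenius gives $L_{p^i}(G)\ge p^i=L_{p^i}(C_{p^k})$, and equality forces an element of order $p^k$. You also correctly diagnose why the purely combinatorial route fails for general $n$: the coefficients $e\prod_{q\mid n/e}(1-q)$ in $\psi=\sum_e c_eL_e$ have mixed signs (already the coefficient of $L_1$ is negative when $n$ has three distinct prime factors), so the Frobenius inequalities alone cannot yield $\psi(G)\le\psi(C_n)$.

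The genuine gap is in the reduction step that is supposed to handle general $n$. Passing to a normal subgroup $M$ of prime index $p$ and writing $\psi(G)=\psi(M)+p\sum_{y\notin M}o(y^p)$ is fine, but the inequality you then need, roughly $p\sum_{y\in G\setminus M}o(y^p)\le\psi(C_n)-\psi(C_{n/p})$, is exactly the hard content of the theorem and is left at the level of ``a Frobenius-style analysis of the fibers of $y\mapsto y^p$ should close the inequality.'' Nothing in your setup controls how the $p$-th power map distributes orders on the cosets of $M$; the trivial bound $o(y^p)\le|M|$ gives $p\sum_{y\notin M}o(y^p)\le(1-\tfrac1p)|G|^2$, which vastly exceeds $\psi(C_n)$, so a real structural input is required and is missing. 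The nonsolvable case and the equality characterization (which you derive by ``tracing through the inductive inequalities'' that were never established) inherit the same gap. For reference, the actual proof in \cite{1} runs differently: one works with the \emph{largest} prime divisor $p$ of $|G|$, uses a coset-counting lemma to produce an element of order large enough to force a cyclic normal Sylow $p$-subgroup $P$, and then applies $\psi(G)\le\psi(P)\psi(G/P)$ (the paper's Lemma 1.4, 4)) together with induction on $|G|$; that normal-Sylow reduction is the ingredient your sketch would need to replace the unproved coset estimate.
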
In other words, the cyclic group $C_n$ is the unique group of order $n$ which attains the maximal value $1$ of $\psi'(G)$ among groups of order $n$.\newpage

Since then many authors have studied the function $\psi(G)$ and its relations with the structure of $G$ (see e.g. \cite{2}-\cite{10}). In the papers \cite{4} and \cite{10} M. Amiri and S.M. Jafarian Amiri, and, independently, R. Shen, G. Chen and C. Wu started the investigation of groups with the second largest value of the sum of element orders. M. Herzog, P. Longobardi and M. Maj \cite{6} determined the exact upper bound for $\psi(G)$ for non-cyclic groups of order $n$:

\begin{thmy}
If $G$ is a finite non-cylic group and $q$ is the least prime divisor of the order of $G$, then
\begin{equation}
\psi'(G)\leq f(q)=\frac{\left[(q^2-1)q+1\right](q+1)}{q^5+1}\,,\nonumber
\end{equation}and the equality holds if and only if $|G|=q^2m$ with $(m,q!)=1$ and $G\cong(C_q\times C_q)\times C_m$.
\end{thmy}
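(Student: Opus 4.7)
The plan is to prove the bound $\psi'(G)\le f(q)$ by separating the abelian and non-abelian cases, with the abelian case yielding the equality characterization and the non-abelian case requiring strict inequality. The central tool is multiplicativity: whenever $\gcd(|H|,|K|)=1$, one has $\psi(H\times K)=\psi(H)\psi(K)$ and consequently $\psi'(H\times K)=\psi'(H)\psi'(K)$. Direct calculation gives $\psi(C_q\times C_q)=q^3-q+1$ and $\psi(C_{q^2})=(q^5+1)/(q+1)$, so $\psi'(C_q\times C_q)=f(q)$; this verifies at once that the extremal candidate $(C_q\times C_q)\times C_m$ attains the bound, and a short monotonicity check confirms that $f(q)$ is strictly decreasing in $q$.

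For the abelian case I would decompose $G=\prod_p P_p$ into its Sylow subgroups, giving $\psi'(G)=\prod_p \psi'(P_p)$. Since $G$ is non-cyclic, at least one $P_p$ is non-cyclic, while Theorem A supplies $\psi'(P_p)\le 1$ for every $p$. The key auxiliary step, to be carried out by comparing $\psi$-values of abelian $p$-groups of a fixed order, is to show that among non-cyclic abelian $p$-groups the maximum of $\psi'$ equals $f(p)$ and is attained only by $C_p\times C_p$, with strict inequality as soon as $|P_p|\ge p^3$. Combined with the monotonicity of $f$ in $q$, this yields $\psi'(G)\le f(q)$, with equality forcing the non-cyclic Sylow to be $C_q\times C_q$ and every other prime divisor of $|G|$ to exceed $q$, i.e., $\gcd(m,q!)=1$.

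The main obstacle is the non-abelian case, which requires the strict inequality $\psi'(G)<f(q)$. I would proceed by induction on $|G|$. If $G$ admits a coprime direct factorization $G=H\times K$ with both factors nontrivial, multiplicativity and the inductive hypothesis immediately close the case. Otherwise, when $Z(G)\neq 1$ one compares $\psi(G)$ with $\psi$ of an associated direct product involving $Z(G)$ and $G/Z(G)$, using the fact that orders of non-central elements combine with central ones in a controlled way. For centerless groups one estimates the number of elements whose order falls strictly short of the corresponding value in $C_n$, using Sylow's theorems and, in the smallest cases, explicit calculation. The most delicate sub-case is near-extremal non-abelian groups whose Sylow $q$-subgroup is already $C_q\times C_q$: these sit closest to $f(q)$ and require a careful comparison of the multisets of element orders in $G$ and in $C_n$ to rule them out.
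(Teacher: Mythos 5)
This statement is Theorem B, which the paper quotes from Herzog--Longobardi--Maj [6] without proof, so there is no in-paper argument to compare against; judged on its own terms, your proposal has a genuine gap in the non-abelian case, which is the heart of the theorem. The abelian reduction is sound in outline: multiplicativity over the Sylow decomposition, the identity $\psi'(C_q\times C_q)=f(q)$, and the monotonicity of $f$ do reduce everything to showing that a non-cyclic abelian $p$-group $P$ satisfies $\psi'(P)\le f(p)$ with equality only for $C_p\times C_p$, which is a routine computation with $\psi(C_{p^a}\times C_{p^b})$. But for non-abelian $G$ neither of your two mechanisms works as described. The proposed comparison of $\psi(G)$ with a ``direct product involving $Z(G)$ and $G/Z(G)$'' rests on an inequality that is false: already for $G=Q_8$ one has $\psi(Q_8)=27$ while $\psi(Z(Q_8))\,\psi(Q_8/Z(Q_8))=3\cdot 7=21$, so $\psi(G)$ can strictly exceed $\psi(Z(G))\psi(G/Z(G))$, and you identify no controlled way of combining central with non-central element orders that would repair this. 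The centerless case is not an argument at all: estimating ``the number of elements whose order falls short'' via Sylow's theorems plus explicit calculation in small cases gives no bound for the infinitely many centerless groups of unbounded order, and the ``careful comparison of the multisets of element orders'' for near-extremal groups is precisely the content of the theorem rather than a method for proving it.

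What is missing is the engine actually used in [6] (and reused in this paper's Lemma 2.1 and the proof of Theorem 1.1): from $\psi(G)>f(q)\psi(C_n)$ together with the lower bound $\psi(C_n)\ge\frac{q}{p+1}\,n^2$ (Lemma 1.4, 3)) one extracts an element $x$ with $[G:\langle x\rangle]$ below an explicit small constant, hence a cyclic Sylow subgroup normalized by $\langle x\rangle$ and, after a short case analysis, a cyclic \emph{normal} Sylow subgroup $P$; then $\psi(G)\le\psi(P)\psi(G/P)$ (Lemma 1.4, 4)) and the exact formula $\psi(G)=|P|\psi(H)+(\psi(P)-|P|)\psi(C_H(P))$ (Lemma 1.4, 5)) allow an induction on $|G|$ and dispose of the near-extremal configurations. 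Without some substitute for this large-order-element/normal-Sylow step, the induction you set up has no way to make progress on a directly indecomposable non-abelian group.
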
Note that the above function $f$ is strictly decreasing on $[2,\infty)$ and consequently the largest value of $\psi'(G)$ is $f(2)=\frac{7}{11}$\,, which is attained for $G\cong(C_2\times C_2)\times C_m$ with $m$ odd. Also, for any prime $q$ we have
\begin{equation}
f(q)<\frac{1}{q-1}\,.
\end{equation}

By using the sum of element orders, several criteria for solvability of finite groups have been also determined (see e.g. \cite{5,7}). Recall here the following theorem of M. Baniasad Asad and B. Khosravi \cite{5}:

\begin{thmy}
If for a finite group $G$ we have
\begin{equation}
\psi'(G)>\frac{211}{1617}\,,\nonumber
\end{equation}then it is solvable.
\end{thmy}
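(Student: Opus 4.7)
The plan is to prove the contrapositive: if $G$ is non-solvable then $\psi'(G)\le 211/1617$. I would argue by induction on $|G|$ with a minimal counterexample. Two preliminary tools are essential. First, $\psi$ is multiplicative on direct products of coprime-order groups, and $\psi(C_n)$ is multiplicative in $n$, so $\psi'(G_1\times G_2)=\psi'(G_1)\,\psi'(G_2)$ whenever $(|G_1|,|G_2|)=1$; in particular, $\psi'(G\times C_m)=\psi'(G)$ for $(m,|G|)=1$. Second, a quantitative ``extension'' inequality relating $\psi(G)$ to $\psi(N)$ and $\psi(G/N)$ for $N\trianglelefteq G$, obtained cosetwise from the divisibility $d\mid o(g)\mid d\cdot\exp(N)$ that holds for every $g\in G$ whose image in $G/N$ has order $d$.

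Exploiting minimality of the counterexample, every proper subgroup and every proper quotient of $G$ is either solvable or already satisfies $\psi'(\cdot)\le 211/1617$. Combined with the multiplicativity tool, this rules out any direct decomposition $G=H\times C_m$ with $(|H|,m)=1$ and $H$ non-solvable, and reduces the analysis to the case where $G$ is either non-abelian simple or has a single non-abelian chief factor sitting on top of a solvable radical. By Thompson's classification of minimal simple groups, the non-abelian composition factor must then lie in a short list: $A_5$, $PSL_2(p)$ for primes $p\ge 7$, $PSL_2(2^p)$, $PSL_2(3^p)$, $PSL_3(3)$, and $Sz(2^p)$.

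The computational core is to check each candidate $S$ in this list by computing $\psi(S)$ from its element-order distribution and $\psi(C_{|S|})=\sum_{d\mid|S|}d\,\phi(d)$, verifying $\psi'(S)\le 211/1617$ in every case. The benchmark is $\psi(A_5)=1+15\cdot 2+20\cdot 3+24\cdot 5=211$ versus $\psi(C_{60})=1617$; all other candidates have a strictly smaller ratio, so the boundary can only be approached via an $A_5$ factor of coprime order (giving $A_5\times C_m$ with $(m,30)=1$).

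The main obstacle will be the lifting step: passing from a bound on $\psi'(S)$ for the simple composition factor to a bound on $\psi'(G)$ itself. The extension inequality of the first paragraph is the tool that does this, but its precise form must be strong enough to propagate losses through a chief series while remaining sharp enough to recover the boundary case. Making the bookkeeping tight across towers of extensions, and ruling out exotic enlargements of $A_5$ that would keep $\psi'$ near the boundary, is where the argument needs the greatest care; the coprime-order multiplicativity is what allows the solvable radical of $G$ to be peeled off cleanly in the favorable cases.
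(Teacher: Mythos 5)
The paper does not prove this statement: it is quoted as Theorem C from Baniasad Asad and Khosravi (reference [5]), so there is no internal proof to compare yours against. Judged on its own terms, your outline has the right general flavour (contrapositive, minimal counterexample, multiplicativity of $\psi'$ on coprime direct factors, and the benchmark computation $\psi(A_5)=211$, $\psi(C_{60})=1617$), but two of its load-bearing steps are genuinely incomplete.

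First, the reduction to Thompson's list is not justified. A minimal counterexample here is a non-solvable group of least order with $\psi'(G)>\frac{211}{1617}$; its proper subgroups and quotients are only known to be solvable \emph{or} to satisfy $\psi'\le\frac{211}{1617}$, which is not the same as $G$ being a minimal non-solvable group. Nothing in your setup prevents the non-abelian composition factor from being, say, $A_6$ or $A_7$, which are not minimal simple, so you must either bound $\psi'$ for all non-abelian finite simple groups (a CFSG-dependent computation) or find a genuinely different reduction. Second, the ``extension inequality'' is the crux of the whole argument and is never actually stated. The cosetwise divisibility $d\mid o(g)\mid d\cdot\exp(N)$ only yields something like $\psi(G)\le |N|\exp(N)\,\psi(G/N)$, which is far too weak to propagate a bound as tight as $\frac{211}{1617}$ through a chief series; worse, $\psi(C_{|G|})$ does not factor as $\psi(C_{|N|})\,\psi(C_{|G/N|})$ unless $(|N|,|G/N|)=1$, so even a sharp bound on $\psi(G)/\psi(G/N)$ does not directly control $\psi'(G)$ in terms of $\psi'(G/N)$. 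You flag this lifting step yourself as the main obstacle; as it stands it is a gap rather than a technicality, and it is precisely where the published proof has to work hardest.
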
Note that the groups $G\cong A_5\times C_m$ with $(30,m)=1$ satisfy $\psi'(G)=\frac{211}{1617}\,$.
\vspace{0.5mm}

Finally, we recall a recent result of M. Herzog, P. Longobardi and M. Maj \cite{8}, which gives an exact upper bound for $\psi(G)$ for non-cyclic groups of order $2m$ with $m$ odd:

\begin{thmy}
If $G$ is a non-cyclic group of order $2m$ with $m$ odd, then
\begin{equation}
\psi'(G)\leq\frac{13}{21}\,,\nonumber
\end{equation}and the equality holds if and only if $G\cong S_3\times C_{m'}$ with $(6,m')=1$.
\end{thmy}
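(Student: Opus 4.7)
The plan is to write $G = N \rtimes \langle t \rangle$, where $N$ is the unique normal subgroup of order $m$ (its existence follows from the classical Cayley/sign-of-permutation argument for any group of order $2m$ with $m$ odd: an involution acts as a product of $m$ disjoint transpositions, an odd permutation, so $G \cap A_{2m}$ has index $2$) and $t$ is an involution. I then split into cases according to whether $t$ centralizes $N$ and whether $N$ is cyclic. If $t$ centralizes $N$, then $G \cong N \times C_2$, and non-cyclicity of $G$ forces $N$ to be non-cyclic of odd order. Coprimality of the two factors gives $\psi'(G) = \psi'(N)$, and Theorem B applied to $N$ yields $\psi'(N) \leq f(q) \leq f(3) = 25/61 < 13/21$, a strict inequality disposing of this case.

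\medskip

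Assume henceforth that $t$ acts non-trivially. First suppose $N \cong C_m$ is cyclic. The action is an involution $\epsilon \in (\Z/m\Z)^*$, and CRT splits $m = m_1 m_2$ with $(m_1, m_2) = 1$ so that $\epsilon$ inverts $C_{m_1}$ and fixes $C_{m_2}$ (using that the only involution in $\operatorname{Aut}(C_{p^k})$ for odd $p$ is $-1$); non-triviality forces $m_1 > 1$. Hence $G \cong D_{m_1} \times C_{m_2}$, and since the $m_1$ reflections of $D_{m_1}$ all have order $2$, $\psi(D_{m_1}) = \psi(C_{m_1}) + 2m_1$. Coprimality of $|D_{m_1}|$ and $|C_{m_2}|$ then gives
\[
\psi'(G) \;=\; \frac{\psi(C_{m_1}) + 2m_1}{3\,\psi(C_{m_1})} \;=\; \frac{1}{3} + \frac{2m_1}{3\,\psi(C_{m_1})}.
\]
The arithmetic function $m \mapsto \psi(C_m)/m$ is multiplicative with $\psi(C_{p^k})/p^k = (p^{2k+1}+1)/((p+1)p^k)$; a short check shows every odd prime-power factor is at least $7/3$, with equality iff $(p,k) = (3,1)$. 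Therefore $\psi'(G) \leq 1/3 + (2/3)(3/7) = 13/21$, with equality iff $m_1 = 3$, i.e., $G \cong S_3 \times C_{m_2}$ with $(6, m_2) = 1$.

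\medskip

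The remaining case---$t$ acts non-trivially on a non-cyclic $N$---is the main obstacle, and the goal there is the \emph{strict} bound $\psi'(G) < 13/21$. The cleanest finish is to establish a uniform estimate $\psi(Nt) \leq 2\,\psi(N)$ on the non-identity coset, for then $\psi(G) = \psi(N) + \psi(Nt) \leq 3\,\psi(N)$ yields $\psi'(G) \leq \psi'(N) \leq f(3) = 25/61 < 13/21$ via Theorem B. Using $(nt)^2 = nn^t$ and the oddness of $|N|$, the target reduces to $\sum_{n \in N} o(nn^t) \leq \psi(N)$; in the abelian situation this follows from the multiplicative identity $d\,\psi(C_{m/d}) \leq \psi(C_m)$ (which saturates only at $d=1$), while in the non-abelian case---$N$ being solvable by Feit--Thompson---one would argue by induction on $|N|$ through a $t$-invariant normal subgroup, eventually reducing to the cyclic case already handled. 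Verifying this estimate uniformly across all non-abelian odd-order $N$ is the technical heart of the proof.
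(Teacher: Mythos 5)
Your reduction to the normal $2$-complement $N$ is sound, and the two cases you complete are correct: when $t$ centralizes $N$, the bound $\psi'(G)=\psi'(N)\le f(3)=\frac{25}{61}<\frac{13}{21}$ does follow from Theorem B, and when $N$ is cyclic your computation $\psi'(G)=\frac{1}{3}+\frac{2m_1}{3\psi(C_{m_1})}\le\frac{1}{3}+\frac{2}{3}\cdot\frac{3}{7}=\frac{13}{21}$, with equality exactly for $m_1=3$, is right (the estimate $m/\psi(C_m)\le\frac{3}{7}$ for odd $m>1$ is precisely the paper's Lemma 2.3). The genuine gap is the third case, which you yourself flag: for $N$ non-cyclic with $t$ acting non-trivially, your entire argument rests on the inequality $\sum_{n\in N}o(n\,n^t)\le\psi(N)$, and this is nowhere proved. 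Even for abelian non-cyclic $N$ the identity you cite, $d\,\psi(C_{m/d})\le\psi(C_m)$, does not apply: the image $K$ of the endomorphism $n\mapsto n\,n^t$ is an arbitrary subgroup of an arbitrary abelian group of odd order, so you would need $[N:K]\,\psi(K)\le\psi(N)$ in that generality, which requires a separate argument. For non-abelian $N$ the ``induction through a $t$-invariant normal subgroup'' is only a gesture --- it is not clear how the coset sum decomposes along a normal series, since $o(n\,n^t)$ is not controlled by the orders of the image in the quotient and of elements of the kernel. This unproved estimate is exactly the hard content of Theorem D, so the proposal as it stands is not a proof.

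It is also worth pointing out that the paper itself does not prove Theorem D: it quotes it from reference [8], and instead derives the stronger Corollary 1.3 from Theorem 1.1. That route is entirely different from yours: one first shows (via Lemma 2.1, Lemma 1.4, 4), and induction) that every group with $\psi'(G)>\frac{13}{21}$ is nilpotent, then observes (Lemma 2.2) that the only non-cyclic nilpotent groups with $\psi'>\frac{13}{21}$ are $Q_8\times C_m$ and $(C_2\times C_2)\times C_m$ with $m$ odd, neither of which has order $2m$ with $m$ odd; the equality case falls out of the analysis of $P\rtimes H$ in the proof of Theorem 1.1. Your direct approach is more elementary where it works, but the missing coset estimate is where all the remaining work lies.
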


Our main result is the following theorem.

\begin{theorem}
If $G$ and a finite group and
\begin{equation}
\psi'(G)>\frac{13}{21}\,,\nonumber
\end{equation}then $G$ is nilpotent. Moreover, we have $\psi'(G)=\frac{13}{21}$ if and only if $G\cong S_3\times C_m$ with $(6,m)=1$.
\end{theorem}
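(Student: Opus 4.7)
The strategy is contrapositive: show that every non-nilpotent finite group $G$ satisfies $\psi'(G)\le \frac{13}{21}$, with equality only when $G\cong S_3\times C_m$ and $(6,m)=1$. Since $\frac{13}{21}>\frac{211}{1617}$, Theorem C immediately confines the problem to solvable groups, so the entire argument below takes place inside the solvable universe.

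The first reduction is on the smallest prime divisor. Let $q$ be the least prime divisor of $|G|$; since $G$ is non-nilpotent it is non-cyclic, so Theorem B gives $\psi'(G)\le f(q)$. If $q\ge 3$, then $f(q)\le f(3)=\frac{25}{61}<\frac{13}{21}$ (indeed $25\cdot 21 = 525 < 793 = 13\cdot 61$), already strict. Hence $|G|$ is even; write $|G|=2^a m$ with $m$ odd. When $a=1$, the group $G$ has order $2m$ with $m$ odd and is non-cyclic, so Theorem D applies verbatim, delivering both the bound $\psi'(G)\le \frac{13}{21}$ and the full equality characterization $G\cong S_3\times C_m$.

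The heart of the argument is the remaining case $a\ge 2$, in which I must prove the \emph{strict} inequality $\psi'(G)< \frac{13}{21}$ so that no new equality configuration arises. I would induct on $|G|$: pick a minimal normal subgroup $N$, which by solvability is an elementary abelian $r$-group, and split on whether $G/N$ is nilpotent. If $G/N$ is non-nilpotent, apply the inductive hypothesis to $G/N$ together with a comparison inequality bounding $\psi(G)$ in terms of $\psi(N)$, $\psi(G/N)$, and $\psi(C_{|G|})$. If $G/N$ is nilpotent, then non-nilpotency of $G$ forces some Sylow subgroup of $G/N$ to act non-trivially on $N$; Hall's theorem then supplies a non-nilpotent $\{r,s\}$-Hall section of strictly smaller order, and induction applies there.

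The main obstacle is precisely the calibration in this final case: the comparison lemma must be sharp enough that the extra factor of $2$ in $|G|$, beyond the single factor Theorem D tolerates, strictly deflates the ratio $\psi(G)/\psi(C_{|G|})$ below $\frac{13}{21}$, thereby ruling out any rival to $S_3\times C_m$ of order divisible by $4$. Once such a lemma is in hand, the equality case is forced back into $a=1$, and Theorem D completes the identification $G\cong S_3\times C_m$ with $(6,m)=1$.
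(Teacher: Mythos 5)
Your opening reductions are sound: Theorem C restricts attention to solvable groups, Theorem B together with $f(3)=\frac{25}{61}<\frac{13}{21}$ forces $2\mid |G|$, and when $|G|=2m$ with $m$ odd Theorem D does hand you both the bound and the equality case. But the entire content of the theorem beyond these cited results lives in the case $4\mid |G|$, and there your proposal is not a proof but a plan resting on a lemma you never state or establish. The ``comparison inequality bounding $\psi(G)$ in terms of $\psi(N)$, $\psi(G/N)$, and $\psi(C_{|G|})$'' for a minimal normal subgroup $N$ does not exist in the generality you need: the only tool of this kind available (Lemma 1.4, 4)) requires $N$ to be a \emph{cyclic normal Sylow} subgroup, whereas your $N$ is merely elementary abelian; and even granted some inequality $\psi(G)\le\psi(N)\,\psi(G/N)$, the denominators do not cooperate, since $\psi(C_{|G|})\ne\psi(C_{|N|})\,\psi(C_{|G/N|})$ when $|N|$ and $|G/N|$ share a prime. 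The branch where $G/N$ is nilpotent is equally ungrounded: locating a non-nilpotent Hall $\{r,s\}$-subgroup $K$ of smaller order with $\psi'(K)\le\frac{13}{21}$ gives you no control over $\psi'(G)$, because $\psi'$ is multiplicative only across direct factors of coprime order, not across Hall subgroups. You flag the calibration as ``the main obstacle,'' and that is exactly right --- it is the theorem.

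For contrast, the paper closes this gap by proving a structural statement (Lemma 2.1): any $G$ with $\psi'(G)>\frac{13}{21}$ that is not a $2$-group has a cyclic normal Sylow $r$-subgroup $P$ with $r=2$ or $r=p$, obtained from the existence of an element of very small index via $\psi(C_n)\ge\frac{q}{p+1}n^2$. Then Lemma 1.4, 4) gives $\psi'(G/P)>\frac{13}{21}$ and induction makes $G/P$ nilpotent; the case $r=2$ finishes by Burnside's normal complement theorem, and the case $r=p$ is settled by classifying the nilpotent groups with $\psi'>\frac{13}{21}$ (Lemma 2.2, using the $2$-groups with a cyclic maximal subgroup) and then running the exact formula of Lemma 1.4, 5) with the bound $\frac{|P|}{\psi(C_{|P|})}\le\frac{3}{7}$ through three explicit cases. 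This is where the value $\frac{13}{21}=\frac{1}{3}+\frac{2}{3}\cdot\frac{3}{7}$ and the group $S_3\times C_m$ actually emerge. To salvage your approach you would need to either prove a usable comparison lemma for minimal normal subgroups or replace that step with something like the paper's cyclic-normal-Sylow argument; as written, the proposal does not prove the theorem.
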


Using Theorem 1.1 and Lemma 2.2, we are able to determine the largest four values of $\psi'$ and the groups for which they are attained.

\begin{corollary}
Let $G$ be a finite group satisfying $\psi'(G)>\frac{13}{21}$\,. Then $\psi'(G)\in\{\frac{27}{43}\,, \frac{7}{11}\,, 1\}$, and one of the following holds:
\begin{itemize}
\item[{\rm a)}] $G\cong Q_8\times C_m$, where $m$ is odd;
\item[{\rm b)}] $G\cong (C_2\times C_2)\times C_m$, where $m$ is odd;
\item[{\rm c)}] $G$ is cyclic.
\end{itemize}
\end{corollary}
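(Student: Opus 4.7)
The plan is to combine the nilpotency secured by Theorem 1.1 with the Sylow decomposition and the sharp bounds from Theorems A and B, together with the $2$-group classification supplied by Lemma 2.2. Since $\psi'(G)>\frac{13}{21}$, Theorem 1.1 gives that $G$ is nilpotent, so $G=P_1\times\cdots\times P_k$ is the direct product of its Sylow $p_i$-subgroups. Because the orders $|P_i|$ are pairwise coprime, both $\psi$ and $\psi(C_{|\cdot|})$ split multiplicatively, and so
\[\psi'(G)=\prod_{i=1}^k\psi'(P_i).\]
Theorem A forces $\psi'(P_i)\le 1$ for every $i$, with equality iff $P_i$ is cyclic, while Theorem B together with inequality~(1) gives $\psi'(P_i)\le f(p_i)<\frac{1}{p_i-1}$ whenever $P_i$ is non-cyclic.

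Next I would rule out the possibility that two or more Sylow factors are non-cyclic. If that happened, then for the two smallest primes $p<q$ appearing, the strict monotonicity of $f$ on $[2,\infty)$ noted after Theorem B gives
\[\psi'(G)\le f(p)f(q)\le f(2)f(3)=\frac{7}{11}\cdot\frac{25}{61}=\frac{175}{671}<\frac{13}{21},\]
a contradiction. Hence at most one Sylow subgroup of $G$ is non-cyclic. If none is, then $G$ itself is cyclic and $\psi'(G)=1$, yielding case (c). Otherwise a unique Sylow subgroup $P$ is non-cyclic, $G\cong P\times C_m$ with $(|P|,m)=1$, and $\psi'(G)=\psi'(P)>\frac{13}{21}$. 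For every odd prime $p$ we have $f(p)<\frac{1}{p-1}\le\frac{1}{2}<\frac{13}{21}$, so $P$ must be a $2$-group.

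At this stage I invoke Lemma 2.2, which restricts the non-cyclic $2$-groups with $\psi'>\frac{13}{21}$ to $C_2\times C_2$, where $\psi'=\frac{7}{11}$, and $Q_8$, where $\psi'=\frac{27}{43}$. Combined with the factorisation $G\cong P\times C_m$ and the coprimality condition, these produce cases (b) and (a) of the corollary, respectively. The genuine content of the argument is therefore packaged entirely into Lemma 2.2, and this is where I expect the main effort to lie, presumably through a short direct verification for small non-cyclic $2$-groups together with a uniform upper bound eliminating the larger ones. Everything else is a routine consequence of the multiplicativity of $\psi'$ on coprime direct products and the numerical bounds already encoded in Theorems A and B.
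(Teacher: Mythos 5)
Your argument is correct and follows essentially the same route the paper intends: Theorem 1.1 yields nilpotency and Lemma 2.2 then delivers the three structural cases, with the values $\tfrac{27}{43}$, $\tfrac{7}{11}$, $1$ coming from multiplicativity of $\psi'$ and the computations $\psi'(Q_8)=\tfrac{27}{43}$, $\psi'(C_2\times C_2)=\tfrac{7}{11}$. Your intermediate Sylow-factor analysis (ruling out several non-cyclic factors and forcing the non-cyclic one to be a $2$-group) merely reproduces steps already contained in the proof of Lemma 2.2, whose statement could be invoked directly.
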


In other words, $\frac{13}{21}$ is the fourth largest value of $\psi'$ on the class of finite groups.
\smallskip

A generalization of Theorem D can be also inferred from Theorem 1.1.

\begin{corollary}
If $G$ is a non-cyclic group of order $2^km$ with $m$ odd and $k\neq 2,3$, then
\begin{equation}
\psi'(G)\leq\frac{13}{21}\,,\nonumber
\end{equation}and the equality holds if and only if $G\cong S_3\times C_{m'}$ with $(6,m')=1$.
\end{corollary}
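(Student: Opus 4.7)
The plan is to derive Corollary 1.3 directly from Theorem 1.1 and Corollary 1.2 by inspecting the $2$-part of the order in each exceptional case, so that no further structural analysis is needed.

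Assume $G$ is non-cyclic of order $2^k m$ with $m$ odd and $k\neq 2,3$, and suppose for contradiction that $\psi'(G)>\frac{13}{21}$. I would then invoke Corollary 1.2, which lists the only possibilities for such $G$: either $G$ is cyclic, or $G\cong Q_8\times C_{m'}$ with $m'$ odd, or $G\cong(C_2\times C_2)\times C_{m'}$ with $m'$ odd. The first is excluded by hypothesis; the second has $|G|=8m'$ and so $k=3$, contradicting $k\neq 3$; the third has $|G|=4m'$ and so $k=2$, contradicting $k\neq 2$. Hence $\psi'(G)\leq\frac{13}{21}$.

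For the equality clause, Theorem 1.1 gives $\psi'(G)=\frac{13}{21}$ if and only if $G\cong S_3\times C_{m'}$ with $(6,m')=1$; such a group has order $6m'=2\cdot 3\cdot m'$, whose $2$-part is $k=1$, consistent with the hypothesis $k\neq 2,3$. The argument is essentially bookkeeping: there is no real obstacle, since the exclusion $k\neq 2,3$ is precisely tailored to remove the two exceptional families $(C_2\times C_2)\times C_{m'}$ and $Q_8\times C_{m'}$ of Corollary 1.2, which are the only non-cyclic groups whose $\psi'$-values exceed $\frac{13}{21}$.
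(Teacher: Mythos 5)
Your proposal is correct and matches the derivation the paper intends: the author states only that Corollary 1.3 ``can be inferred from Theorem 1.1,'' and your bookkeeping via Corollary 1.2 --- excluding $(C_2\times C_2)\times C_{m'}$ and $Q_8\times C_{m'}$ because their $2$-parts force $k=2$ or $k=3$, then reading off the equality case from Theorem 1.1 --- is exactly that inference. No gaps.
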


For the proof of Theorem 1.1, we need some preliminary results from the papers \cite{1} and \cite{6}.

\begin{lemma}
The following statements hold:
\begin{itemize}
\item[{\rm 1)}]{\rm ((\cite{6}, Lemma 2.2(3))} $\psi$ is multiplicative, that is if $G=A\times B$, where $A,B$ are subgroups of $G$ satisfying
$(|A|,|B|)=1$, then  $\psi(G)=\psi(A)\psi(B)$; note that this implies that $\psi'$ is also multiplicative;
\item[{\rm 2)}]{\rm ((\cite{6}, Lemma 2.9(1))} $\psi(C_{p^n})=\frac{p^{2n+1}+1}{p+1}$\,;
\item[{\rm 3)}]{\rm ((\cite{6}, Proof of Lemma 2.9(2))} If $n$ be a positive integer larger than $1$, with the largest prime
divisor $p$ and the smallest prime divisor $q$, then $\psi(C_n)\geq\frac{q}{p+1}n^2$\,;
\item[{\rm 4)}]{\rm ((\cite{1}, Corollary B)} If $P$ is a cyclic normal Sylow subgroup of $G$ then $\psi(G)\leq\psi(P)\psi(G/P)$, with equality if and only if $P$ is central in $G$;
\item[{\rm 5)}]{\rm ((\cite{6}, Lemma 2.2(5))} If $G=P\rtimes H$, where $P$ is a cyclic $p$-group, $|H|>1$ and $(p,|H|)=1$, then $\psi(G)=|P|\psi(H)+(\psi(P)-|P|)\psi(C_H(P))$.
\end{itemize}
\end{lemma}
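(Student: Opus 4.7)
The lemma collects five standard facts about $\psi$; the plan is to verify each in turn, with item (5) being the main obstacle due to the analysis of element orders in a nontrivial semidirect product.

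Items (1) and (2) are direct computations. For (1), in $G=A\times B$ with $(|A|,|B|)=1$ every pair $(a,b)$ has order $\mathrm{lcm}(o(a),o(b))=o(a)o(b)$, so $\psi(G)=\sum_{a,b}o(a)o(b)=\psi(A)\psi(B)$; multiplicativity of $\psi'$ then follows from $C_{|G|}\cong C_{|A|}\times C_{|B|}$. For (2), partitioning $C_{p^n}$ by element order gives $\varphi(p^k)=p^k-p^{k-1}$ elements of order $p^k$ for $1\le k\le n$, plus the identity, so
\[
\psi(C_{p^n})=1+\sum_{k=1}^n p^k(p^k-p^{k-1})=1+\frac{p-1}{p}\sum_{k=1}^n p^{2k},
\]
and collapsing the geometric series yields $(p^{2n+1}+1)/(p+1)$. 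For (3), write $n=\prod_{i=1}^k p_i^{a_i}$ with primes $p_1<\cdots<p_k$, so $q=p_1$ and $p=p_k$. Using (1) and (2),
\[
\psi(C_n)=\prod_{i=1}^k\frac{p_i^{2a_i+1}+1}{p_i+1}\ \ge\ n^2\prod_{i=1}^k\frac{p_i}{p_i+1},
\]
and it suffices to check $\prod_i p_i/(p_i+1)\ge q/(p+1)$. After clearing denominators this reduces to $p_2p_3\cdots p_k\ge (p_1+1)(p_2+1)\cdots(p_{k-1}+1)$, which holds termwise because consecutive distinct primes satisfy $p_{i+1}\ge p_i+1$.

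For (5), I partition $\psi(G)=\sum_{x\in P,\,h\in H}o(xh)$ according to whether $h$ centralizes $P$. If $h\in C_H(P)$, then $x$ and $h$ commute and have coprime orders, so $o(xh)=o(x)o(h)$ and the contribution is $\psi(P)\psi(C_H(P))$. The key step is the case $h\notin C_H(P)$: here I claim $o(xh)=o(h)$ for every $x\in P$. Writing the action of $h$ on the cyclic $p$-group $P$ as $x\mapsto x^r$, the hypothesis $(o(h),p)=1$ forces the induced automorphism to lie in the unique subgroup of $\mathrm{Aut}(P)$ of order dividing $p-1$; this subgroup is detected by reduction mod $p$, so $r\not\equiv 1\pmod{p}$ and hence $r-1$ is a unit modulo $|P|$. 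A direct computation
\[
(xh)^m=x^{1+r+\cdots+r^{m-1}}h^m=x^{(r^m-1)/(r-1)}h^m
\]
then gives $(xh)^{o(h)}=1$, while projecting to $H$ shows conversely that $o(h)\mid o(xh)$. Thus the contribution of this case is $|P|(\psi(H)-\psi(C_H(P)))$, and summing the two cases yields the claimed formula.

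Finally, (4) follows from (5) via the Schur--Zassenhaus theorem: since $P$ is a normal Hall subgroup, it admits a complement $H\cong G/P$, and rearranging the formula of (5) yields
\[
\psi(P)\psi(G/P)-\psi(G)=(\psi(P)-|P|)\bigl(\psi(H)-\psi(C_H(P))\bigr)\ge 0,
\]
with equality if and only if $H=C_H(P)$, that is, $P$ is central in $G$.
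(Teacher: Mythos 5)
Your proposal is correct. Note, though, that the paper does not prove Lemma 1.4 at all: it is a list of facts imported verbatim from \cite{1} and \cite{6}, so there is no ``paper proof'' to match. Your items (1)--(3) are the standard computations (and your reduction of (3) to $p_2\cdots p_k\ge(p_1+1)\cdots(p_{k-1}+1)$ via $p_{i+1}\ge p_i+1$ is exactly the right way to clear the denominators). Your argument for (5) is essentially the one in \cite{6}: split the elements $xh$ according to whether $h$ centralizes $P$, and in the non-centralizing case show $o(xh)=o(h)$ by noting that the automorphism induced by $h$ has order coprime to $p$, hence is a nontrivial element of the order-$(p-1)$ subgroup of $\mathrm{Aut}(P)$, so $r-1$ is a unit mod $|P|$ and the telescoping exponent $1+r+\cdots+r^{o(h)-1}=(r^{o(h)}-1)/(r-1)$ is divisible by $|P|$. (One small remark: for $p=2$ the phrase ``detected by reduction mod $p$'' is vacuous, but so is the whole non-centralizing case, since an automorphism of $C_{2^n}$ of odd order is trivial; it would be worth saying this explicitly so the claim $r\not\equiv 1\pmod p$ is not read as applying to $p=2$.) Where you genuinely diverge from the sources is item (4): in \cite{1} Corollary B is proved independently, whereas you derive it from (5) via Schur--Zassenhaus and the identity $\psi(P)\psi(H)-\psi(G)=(\psi(P)-|P|)(\psi(H)-\psi(C_H(P)))\ge 0$. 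This is an economical unification, and the equality analysis ($P=1$ or $C_H(P)=H$, both of which mean $P\le Z(G)$) is handled correctly; you should just add the one-line observation that a normal Sylow subgroup is a normal Hall subgroup so that Schur--Zassenhaus indeed applies, and that the degenerate case $H=1$ (excluded by the hypothesis $|H|>1$ in (5)) is trivial.
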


Inspired by the above results, we came up with the following conjecture.

\begin{conjecture}
If $G$ is a finite group and
\begin{equation}
\psi'(G)>\frac{31}{77}\,,\nonumber
\end{equation}then $G$ is supersolvable. Moreover, we have $\psi'(G)=\frac{31}{77}$ if and only if $G\cong A_4\times C_m$ with $(6,m)=1$.
\end{conjecture}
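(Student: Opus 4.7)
The plan is to follow the inductive scheme used for Theorem 1.1, but with supersolvability in place of nilpotency and $A_4 \times C_m$ in place of $S_3 \times C_m$ as the extremal family. Take $G$ to be a minimal counterexample: $\psi'(G) > 31/77$ and $G$ is not supersolvable. Since every finite nilpotent group is supersolvable, $G$ must be non-nilpotent, so Theorem 1.1 forces $\psi'(G) \le 13/21$, with equality only when $G \cong S_3 \times C_m$, which is supersolvable. Hence $31/77 < \psi'(G) < 13/21$; and since $31/77 > 211/1617$, Theorem C lets us also assume $G$ is solvable. One checks directly that $\psi'(A_4) = 31/77$ (using $\psi(A_4)=1+3\cdot 2+8\cdot 3=31$ and $\psi(C_{12})=\psi(C_4)\psi(C_3)=11\cdot 7=77$), which will be the target equality case.

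The first structural step would handle cyclic normal Sylow subgroups. If $G$ has such a subgroup $P$, then Lemma 2.2(4) gives $\psi'(G) \le \psi'(G/P)$, so the induction hypothesis forces $G/P$ to be supersolvable. When $P$ is central, Schur--Zassenhaus supplies a direct decomposition $G \cong P \times H$ with $H$ supersolvable, making $G$ itself supersolvable and contradicting minimality. So any cyclic normal Sylow of $G$ would have to be non-central, and by the equality clause of Lemma 2.2(4) the inequality is strict; one would then need a quantitative version of this strictness strong enough to force $\psi'(G) \le 31/77$, plausibly by combining it with bounds of the form (1) on non-cyclic Sylow quotients and with part 5) of Lemma 2.2 applied to any cyclic Sylow $p$-subgroup on which the complement acts non-trivially.

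The main case is what remains: $G$ admits no cyclic central Sylow factor, and the Fitting subgroup $F(G)$ necessarily contains a non-cyclic elementary abelian $p$-subgroup, with $A_4$ the minimal prototype. Here one would develop an analogue of Lemma 2.2(5) in which the normal factor is a (non-cyclic) elementary abelian $p$-group $V$ acted on by a coprime-order complement $H$: decomposing $V \setminus \{1\}$ into $H$-orbits and combining with Lemma 2.2(1)--(3) applied to $H$ should produce a closed-form bound for $\psi(V \rtimes H)$, which one then compares against $\psi(C_{|V||H|})$ computed via Lemma 2.2(2)--(3). The equality analysis would pin down $V \cong C_2 \times C_2$, an $H$ with a $C_3$-factor acting as a $3$-cycle on the involutions of $V$ (producing an $A_4$ direct summand), and any coprime remainder appearing as a cyclic direct factor $C_m$ with $(6,m)=1$.

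The principal obstacle is this last step. The toolkit in Lemma 2.2 is tailored to cyclic Sylow subgroups, whereas the extremal group $A_4$ has $V_4$ as its unique minimal normal subgroup and $V_4$ is non-cyclic; in particular neither Lemma 2.2(4) nor Lemma 2.2(5) applies to the $V_4 \triangleleft A_4$ situation off the shelf. The genuinely new input required is an elementary-abelian analogue of Lemma 2.2(4)--(5), together with a sharp enough accounting of faithful coprime actions on elementary abelian $p$-groups to single out the $C_3$-action on $V_4$ as the unique extremizer. Organizing the resulting case analysis will likely call for an appeal to Doerk's classification of minimal non-supersolvable groups, which gives an explicit short list of solvable non-supersolvable groups whose every proper subgroup is supersolvable and thus bounds the ``hard'' configurations one must rule out by hand.
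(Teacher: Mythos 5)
The statement you are trying to prove is Conjecture 1.5 of the paper: the author offers no proof of it, so there is nothing to compare your attempt against, and your text is in any case a research programme rather than a proof --- you yourself flag the ``principal obstacle'' and do not resolve it. The genuine gap is exactly where you locate it, but it is worth seeing concretely why the paper's machinery cannot be transplanted. The engine of the whole paper is Lemma 2.1: from $\psi'(G)>\frac{13}{21}$ and Lemma 1.4(3) one extracts $x\in G$ with $[G:\langle x\rangle]<\frac{21(p+1)}{26}$, and for $p\geq 5$ this is $<p$, forcing a cyclic Sylow $p$-subgroup inside $\langle x\rangle$ and hence a cyclic normal Sylow subgroup. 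At the threshold $\frac{31}{77}$ the same computation gives only $[G:\langle x\rangle]<\frac{77(p+1)}{62}$, and $\frac{77(p+1)}{62}<p$ is equivalent to $15p<-77$, which holds for no prime whatsoever. So the analogue of Lemma 2.1 fails identically: the hypothesis $\psi'(G)>\frac{31}{77}$ does not produce any cyclic normal Sylow subgroup, and with it goes the induction via Lemma 1.4(4) and the computation via Lemma 1.4(5). (As a side remark, your second paragraph is more complicated than necessary: if $P$ is a cyclic normal subgroup and $G/P$ is supersolvable then $G$ is supersolvable outright, since the subgroups of $P$ are characteristic in $P$ and hence normal in $G$; no centrality, Schur--Zassenhaus, or ``quantitative strictness'' is needed in that branch. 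But that branch is not the problem.)

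What is actually required --- and what neither you nor the paper supplies --- is (i) a structural dichotomy valid down at $\frac{31}{77}$ replacing Lemma 2.1, and (ii) an analogue of Lemma 1.4(4)--(5) for a non-cyclic elementary abelian normal subgroup $V$ with coprime complement $H$, sharp enough that the unique configuration attaining $\frac{31}{77}$ is $V\cong C_2\times C_2$ with $C_3$ permuting its involutions. Your suggestion of summing over $H$-orbits on $V\setminus\{1\}$ is the natural starting point, and Doerk's list of minimal non-supersolvable groups is a reasonable organizing device, but until those two ingredients are actually established the argument does not close; the verification that $\psi'(A_4)=\frac{31}{77}$ only confirms that the proposed bound is attained, not that it is an upper bound on non-supersolvable groups. (One small bookkeeping point: the five-part lemma you cite as ``Lemma 2.2'' is Lemma 1.4 of the paper; Lemma 2.2 is the classification of nilpotent groups with $\psi'>\frac{13}{21}$.)
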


\section{Proofs of the main results}

Throughout this section, given a finite group $G$ we will denote by $q$ and $p$ the smallest
and the largest prime divisor of $|G|$, respectively.

\begin{lemma}
Let $G$ be a finite group. If $\psi'(G)>\frac{13}{21}$ and $G$ is not a $2$-group, then it has a cyclic normal Sylow $r$-subgroup, where either $r=2$ or $r=p$.
\end{lemma}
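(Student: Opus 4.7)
The plan is to reduce first to the non-cyclic case and then to use Theorem B to pin down the smallest prime divisor. If $G$ is cyclic, every Sylow subgroup is cyclic and normal and the conclusion is immediate (take $r=p$). So assume $G$ is non-cyclic. By Theorem B, $\psi'(G)\leq f(q)$. Since $f$ is strictly decreasing on $[2,\infty)$ and
\[
f(3)=\frac{(3^2-1)\cdot 3+1}{3^5+1}\cdot (3+1)=\frac{25}{61}<\frac{13}{21},
\]
the hypothesis $\psi'(G)>\frac{13}{21}$ forces $q=2$. Since by assumption $G$ is not a $2$-group, we also have $p>2$, so both $2$ and $p$ divide $|G|$.

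Next I would argue by contradiction: assume that neither the Sylow $2$-subgroup $P_2$ nor the Sylow $p$-subgroup $P_p$ of $G$ is cyclic and normal, and aim to derive the bound $\psi'(G)\leq\frac{13}{21}$, contradicting the hypothesis. For each of the two primes $r\in\{2,p\}$, one of two failure modes must occur: (i) $P_r$ is not normal in $G$, in which case Sylow's theorem gives at least $r+1$ conjugate Sylow $r$-subgroups and one can count elements of order divisible by $r$; or (ii) $P_r$ is normal in $G$ but non-cyclic, in which case Theorem B applied inside $P_r$ gives $\psi(P_r)\leq f(r)\,\psi(C_{|P_r|})$, with the crucial consequence $\psi(P_r)<\frac{1}{r-1}\psi(C_{|P_r|})$ from formula~(1). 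In either mode the structure lets one split off a normal complement (or a Hall subgroup) coprime to $r$ and apply the multiplicativity of $\psi'$ from Lemma 2.2(1), together with Lemma 2.2(4) or Lemma 2.2(5), to collapse the estimate into a product of local contributions.

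Combining the local estimates at $r=2$ and $r=p$ should yield $\psi'(G)\leq f(2)\cdot\frac{1}{p-1}\cdot(\text{other factors})$, and the key numerical check is that this product never exceeds $\frac{13}{21}$ once both Sylow subgroups fail to be cyclic normal. The values $\psi(C_{p^n})=(p^{2n+1}+1)/(p+1)$ from Lemma 2.2(2) and the lower estimate $\psi(C_n)\geq\frac{q}{p+1}n^2$ from Lemma 2.2(3) are precisely what is needed to turn the local bounds into a clean numerical comparison.

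The main obstacle is the case where $P_2$ is non-cyclic but $f(2)=\frac{7}{11}>\frac{13}{21}$, so the Sylow $2$-estimate by itself is not enough: one must use the hypothesis on $P_p$ to save the additional factor $\frac{13/21}{7/11}=\frac{143}{147}$. Pushing this through requires a careful semidirect-product decomposition via Lemma 2.2(5) with $P$ taken to be the cyclic Hall $\{2,p\}'$-part, and the sharpest form of Theorem B applied to $P_p$; balancing these two inequalities so that the combined bound falls below $\frac{13}{21}$ is where the delicate arithmetic lives.
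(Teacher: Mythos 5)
Your reduction to the non-cyclic case and the deduction $q=2$ from Theorem~B (or from inequality~(1)) are correct and match the paper. But from that point on the proposal is a plan rather than a proof, and the plan itself has a structural flaw. The paper's actual argument hinges on a single concrete step you never take: from $\psi(G)>\frac{13}{21}\psi(C_{|G|})\geq\frac{13}{21}\cdot\frac{2}{p+1}|G|^2$ (Lemma~1.4(3)) one extracts an element $x$ with $[G:\langle x\rangle]<\frac{21(p+1)}{26}$. For $p\geq 5$ this index is less than $p$, so $\langle x\rangle$ contains a full Sylow $p$-subgroup $P$ and $\langle x\rangle\leq N_G(P)$ forces $P\vartriangleleft G$; for $p=3$ the index is $2$ or $3$ and a short case analysis (plus one counting estimate when $N_G(Q)=\langle x\rangle$) finishes the job. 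Nothing in your sketch produces this large-order element, which is the engine that makes a Sylow subgroup simultaneously cyclic and normal.

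The contradiction framework you propose instead does not go through with the tools available. If $P_r$ is normal but non-cyclic, no lemma in the paper lets you pass from $\psi'(P_r)\leq f(r)$ to a bound on $\psi'(G)$: Lemma~1.4(4) requires the normal Sylow subgroup to be \emph{cyclic}, and Lemma~1.4(1) requires a coprime \emph{direct} factor, neither of which you have. If $P_r$ is non-normal, counting elements of order divisible by $r$ over the $\geq r+1$ conjugates is delicate (the conjugates may intersect) and you do not carry it out. Most seriously, your appeal to ``splitting off a normal complement (or Hall subgroup) coprime to $r$'' assumes exactly the kind of normal structure the lemma is meant to establish, so the argument is circular as stated. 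Finally, negating the conclusion gives four combinations (each of $P_2$, $P_p$ failing by non-normality or by non-cyclicity), and the ``delicate arithmetic'' you defer is precisely the content of the proof; as written, there is no complete argument for any of the four cases.
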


\begin{proof}
If $G$ is cyclic, we are done. If $G$ is not cyclic, then the conditions $\psi'(G)>\frac{13}{21}$ and (1) imply $q=2$. Also, we have $p\geq 3$ since $G$ is not a $2$-group. By Lemma 1.4, 3), it follows that
\begin{equation}
\psi(G)>\frac{13}{21}\,\psi(C_{|G|})\geq\frac{13}{21}\,\frac{2}{p+1}\,|G|^2=\frac{26}{21(p+1)}\,|G|^2\nonumber
\end{equation}and so there exists $x\in G$ with $o(x)>\frac{26}{21(p+1)}\,|G|$, i.e.
\begin{equation}
[G:\langle x\rangle]<\frac{21(p+1)}{26}\,.\nonumber
\end{equation}

Suppose first that $p\geq 5$. Then $\frac{21(p+1)}{26}<p$ and thus $\langle x\rangle$ contains a cyclic Sylow $p$-subgroup $P$ of $G$. Since $\langle x\rangle\leq N_G(P)$, it follows that $P$ is normal in $G$, as desired.\newpage

Next assume that $p=3$. Then $[G:\langle x\rangle]<\frac{42}{13}$ and thus $[G:\langle x\rangle]\in\{2,3\}$. If $[G:\langle x\rangle]=2$, then $\langle x\rangle$ contains a cyclic normal Sylow $3$-subgroup of $G$, as above. If $[G:\langle x\rangle]=3$, then $\langle x\rangle$ contains a cyclic Sylow $2$-subgroup $Q$ of $G$, and we have $\langle x\rangle\leq N_G(Q)$. Therefore either $N_G(Q)=G$, i.e. $Q\vartriangleleft G$, or $N_G(Q)=\langle x\rangle$. In the latter case, assume that there exists $y\in G\setminus\langle x\rangle$ with $o(y)=\frac{|G|}{2}\,$. Then $\langle y\rangle$ will contain a cyclic normal Sylow $3$-subgroup of $G$, and we are done. Assume now that $o(y)\leq\frac{|G|}{3}\,$, for all $y\in G\setminus\langle x\rangle$, and put $|G|=2^a3^b$ with $a,b\geq 1$. Using Lemma 1.4, 1) and 2), one obtains
$$\psi(G)=\psi(\langle x\rangle)+\psi(G\setminus\langle x\rangle)\leq\psi(\langle x\rangle)+\frac{2}{9}\,|G|^2=$$
$$\hspace{1mm}=\frac{(2^{2a+1}+1)(3^{2b-1}+1)+2^{2a+3}3^{2b-1}}{12}\,,$$and consequently
$$\psi'(G)\leq\frac{(2^{2a+1}+1)(3^{2b-1}+1)+2^{2a+3}3^{2b-1}}{(2^{2a+1}+1)(3^{2b+1}+1)}\leq\frac{3^{2b-1}+1}{3^{2b+1}+1}\,+$$
$$\hspace{3mm}+\frac{4}{9}\,\frac{2^{2a+1}3^{2b+1}}{(2^{2a+1}+1)(3^{2b+1}+1)}<\frac{1}{7}+\frac{4}{9}=\frac{35}{63}<\frac{13}{21}\,,$$a contradiction.
\end{proof}

In the following lemma we determine all finite nilpotent groups $G$ satisfying $\psi'(G)>\frac{13}{21}\,$.

\begin{lemma}
Let $G$ be a finite group. If $\psi'(G)>\frac{13}{21}$ and $G$ is nilpotent, then one of the following holds:
\begin{itemize}
\item[{\rm a)}] $G\cong Q_8\times C_m$, where $m$ is odd;
\item[{\rm b)}] $G\cong (C_2\times C_2)\times C_m$, where $m$ is odd;
\item[{\rm c)}] $G$ is cyclic.
\end{itemize}
\end{lemma}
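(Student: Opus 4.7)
The plan is to exploit nilpotency to write $G = \prod_p S_p$ as the internal direct product of its Sylow subgroups, and then use the multiplicativity of $\psi'$ from Lemma~1.4(1) to reduce the problem to analysing each $p$-group separately: $\psi'(G) = \prod_p \psi'(S_p)$.

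First I would show that every odd Sylow subgroup of $G$ is cyclic. If some $S_p$ with $p$ odd were non-cyclic, then Theorem~B applied to $S_p$ (whose smallest prime divisor is $p$), combined with inequality~(1), would give
\[
\psi'(S_p) \leq f(p) < \tfrac{1}{p-1} \leq \tfrac{1}{2} < \tfrac{13}{21}.
\]
Since $\psi'(S_q) \leq 1$ for every other Sylow, multiplicativity would force $\psi'(G) < 13/21$, contradicting the hypothesis. So $\psi'(S_p) = 1$ for every odd $p \mid |G|$ and $\psi'(G) = \psi'(S_2)$. If $S_2$ is itself cyclic then $G$ is a direct product of cyclic groups of pairwise coprime orders, hence cyclic, which is case~(c).

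It remains to classify non-cyclic 2-groups $P$ satisfying $\psi'(P) > 13/21$; I claim $P \cong C_2 \times C_2$ or $P \cong Q_8$, yielding cases~(b) and~(a). Writing $|P| = 2^n$: for $n = 2$ necessarily $P \cong C_2 \times C_2$ (with $\psi'(P) = 7/11$); for $n = 3$ a direct computation on the four non-cyclic groups of order $8$ (namely $C_4 \times C_2$, $C_2^3$, $D_8$, $Q_8$) yields $\psi$-values $23, 15, 19, 27$ against $\psi(C_8) = 43$, and since $27/43 > 13/21 > 23/43$, only $Q_8$ survives.

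For $n \geq 4$ I would split on the maximum element order $e$ of $P$. If $e \leq 2^{n-2}$, the trivial bound $\psi(P) \leq |P|\,e \leq 2^{2n-2}$ combined with Lemma~1.4(2) gives $\psi'(P) \leq 3 \cdot 2^{2n-2}/(2^{2n+1}+1) < 3/8 < 13/21$. If instead $e = 2^{n-1}$, then $P$ contains a cyclic subgroup of index $2$, and the classical classification restricts $P$ to one of five families: $C_{2^{n-1}} \times C_2$, $D_{2^n}$, $Q_{2^n}$, $SD_{2^n}$, or $M_{2^n}$. For each one can count element orders in the non-trivial coset directly; the maximum value of $\psi(P)$ across these five types turns out to be $(2^{2n}+5)/3$ (attained by $C_{2^{n-1}} \times C_2$ and $M_{2^n}$), giving $\psi'(P) \leq (2^{2n}+5)/(2^{2n+1}+1)$, which is decreasing in $n$ and already equals $87/171 < 13/21$ at $n = 4$. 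The main obstacle is precisely this $n \geq 4$ case: without a cleaner abstract bound one must invoke the classification of 2-groups with a cyclic maximal subgroup and handle the five families individually. Everything else is either multiplicativity bookkeeping or a small-order enumeration.
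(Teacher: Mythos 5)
Your proposal is correct and follows essentially the same route as the paper: reduce via nilpotency and multiplicativity of $\psi'$ to the Sylow $2$-subgroup (killing non-cyclic odd Sylows with Theorem B and inequality (1)), show the $2$-group must contain a cyclic maximal subgroup, and then check the five families in the classification of such $2$-groups, leaving only $C_2\times C_2$ and $Q_8$. Your max-element-order dichotomy for $n\ge 4$ is just a repackaging of the paper's pigeonhole step ($\psi(G_1)>\tfrac{26}{63}2^{2n_1}$ forces an element of index $2$), and your numerical values for the five families are consistent with the correct counts (the paper's displayed formula for $\psi'(M(2^{n_1}))$ appears to contain a typo, but this does not affect the conclusion).
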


\begin{proof}
Suppose that $G$ is not cyclic. As in the proof of Lemma 2.1, we have again $q=2$. Let $n=p_1^{n_1}p_2^{n_2}\cdots p_k^{n_k}$ be the decomposition of $n$ as a product of prime factors, where $k\in\mathbb{N}^*$, $p_1=2$ and $p_1<p_2<\cdots<p_k$. Since $G$ is nilpotent, it can be written as the direct product of its Sylow $p_i$-subgroups
\begin{equation}
G\cong G_1\times G_2\times\cdots\times G_k.\nonumber
\end{equation}By Lemma 1.4, 1), it follows that
\begin{equation}
\frac{13}{21}<\psi'(G)=\psi'(G_1)\psi'(G_2)\cdots\psi'(G_k)\leq\psi'(G_i),\, \forall\, i=2,3,...,k.
\end{equation}If there is $i$ such that $G_i$ is not cyclic, then (1) gives
\begin{equation}
\psi'(G_i)<\frac{1}{p_i-1}\leq\frac{1}{2}\,,\nonumber
\end{equation}contradicting (2). So, we have
\begin{equation}
G\cong G_1\times C_m, \mbox{ where } m \mbox{ is odd},\nonumber
\end{equation}and
\begin{equation}
\frac{13}{21}<\psi'(G)=\psi'(G_1).\nonumber
\end{equation}This leads to
\begin{equation}
\psi(G_1)>\frac{13}{21}\,\psi(C_{2^{n_1}})\geq\frac{13}{21}\,\frac{2}{3}\,2^{2n_1}=\frac{26}{63}\,2^{2n_1}\nonumber
\end{equation}and so there exists $x\in G_1$ with $o(x)>\frac{26}{63}\,2^{n_1}$, i.e.
\begin{equation}
[G_1:\langle x\rangle]<\frac{63}{26}\,.\nonumber
\end{equation}Clearly, this implies that $[G_1:\langle x\rangle]=2$, i.e. $G_1$ possesses a cyclic maximal subgroup. Using  Theorem 4.1 of \cite{11}, II, we infer that either $G_1$ is abelian of type $C_2\times C_{2^{n_1-1}}$, $n_1\geq 2$, or non-abelian of one of the following types:
\begin{itemize}
\item[-] $M(2^{n_1})=\langle x,y\mid x^{2^{n_1-1}}=y^2=1\,, yxy=x^{2^{n_1-2}+1}\rangle$, $n_1\geq 4$;
\item[-] $D_{2^{n_1}}=\langle x,y\mid x^{2^{n_1-1}}=y^2=1\,, yxy=x^{-1}\rangle$;
\item[-] $Q_{2^{n_1}}=\langle x,y\mid x^{2^{n_1-1}}=y^4=1\,, yxy^{-1}=x^{2^{n_1-1}-1}\rangle$;
\item[-] $S_{2^{n_1}}=\langle x,y\mid x^{2^{n_1-1}}=y^2=1\,, yxy=x^{2^{n_1-2}-1}\rangle$, $n_1\geq 4$.
\end{itemize}If $G_1\cong C_2\times C_{2^{n_1-1}}$, then
\begin{equation}
\psi'(G_1)=\frac{2^{2n_1}+5}{2^{2n_1+1}+1}>\frac{13}{21}\Leftrightarrow 2^{2n_1}<\frac{92}{5}\Leftrightarrow n_1=2, \mbox{ i.e. } G_1\cong C_2\times C_2,\nonumber
\end{equation}while if $G_1$ is non-abelian, then we get:
\begin{itemize}
\item[-] $\psi'(M(2^{n_1}))=\displaystyle\frac{3\cdot 2^{n_1}}{2^{2n_1+1}+1}<\frac{13}{21}\,,\,\forall\, n_1\geq 4$;
\item[-] $\psi'(D_{2^{n_1}})=\displaystyle\frac{2^{2n_1-1}+3\cdot 2^{n_1}+1}{2^{2n_1+1}+1}<\frac{13}{21}\,,\,\forall\, n_1\geq 3$;
\item[-] $\psi'(Q_{2^{n_1}})=\displaystyle\frac{2^{2n_1-1}+3\cdot 2^{n_1+1}+1}{2^{2n_1+1}+1}>\frac{13}{21}\Leftrightarrow n_1=3$, i.e. $G_1\cong Q_8$;
\item[-] $\psi'(S_{2^{n_1}})=\displaystyle\frac{2^{2n_1-1}+9\cdot 2^{n_1-1}+1}{2^{2n_1+1}+1}<\frac{13}{21}\,,\,\forall\, n_1\geq 4$.
\end{itemize}This completes the proof.
\end{proof}

We also state an elementary lemma which will be useful to us in the sequel.

\begin{lemma}
Let $p$ be an odd prime and $P$ be a cyclic $p$-group of order $p^n$. Then
\begin{equation}
\frac{|P|}{\psi(C_{|P|})}\leq\frac{3}{7}\,,\nonumber
\end{equation}and the equality occurs if and only if $p=3$ and $n=1$.
\end{lemma}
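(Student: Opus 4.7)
The plan is to use part 2) of Lemma 1.4 to rewrite the target ratio as a single explicit rational function of $p$ and $n$, verify the asserted equality case by direct substitution, and then bound the ratio away from $3/7$ in every remaining case via a crude estimate. Concretely, since $|P|=p^n$ and $\psi(C_{p^n})=(p^{2n+1}+1)/(p+1)$, the quantity under consideration is $p^n(p+1)/(p^{2n+1}+1)$. Substituting $p=3$ and $n=1$ yields $12/28=3/7$, so the claimed extremal value is attained.

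For the strict inequality in all remaining cases, the key observation is that
\begin{equation}
\frac{p^n(p+1)}{p^{2n+1}+1}<\frac{p^n(p+1)}{p^{2n+1}}=\frac{p+1}{p^{n+1}},\nonumber
\end{equation}
and this right-hand side already lies below $3/7$ outside the exceptional pair. I would split into two subcases. If $p\geq 5$, then $(p+1)/p^{n+1}\leq(p+1)/p^2\leq 6/25<3/7$. If $p=3$ and $n\geq 2$, then $(p+1)/p^{n+1}\leq 4/27<3/7$. In either range the strict inequality $|P|/\psi(C_{|P|})<3/7$ follows, leaving $(p,n)=(3,1)$ as the unique equality case.

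I do not expect a genuine obstacle; the argument is a short computation followed by a two-case split. The only point worth flagging is the reason one cannot treat $(p,n)=(3,1)$ with the same crude bound as the rest: precisely at that pair, $(p+1)/p^{n+1}$ equals $4/9$, which exceeds $3/7$, so the estimate is not sharp enough to cover it. This is exactly why the exact formula from Lemma 1.4, 2) must be used for that single instance, and it also explains structurally why the equality case is the smallest odd prime with exponent one.
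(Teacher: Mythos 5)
Your proof is correct. The paper states this lemma without proof (calling it ``elementary''), and your argument --- rewriting the ratio as $p^n(p+1)/(p^{2n+1}+1)$ via Lemma 1.4, 2), checking $(p,n)=(3,1)$ directly, and disposing of all other cases with the crude bound $(p+1)/p^{n+1}$ --- is exactly the intended verification, complete and with the equality case correctly isolated.
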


We are now able to prove our main result.

\bigskip\noindent{\bf Proof of Theorem 1.1.} We will proceed by induction on $|G|$. If $G$ is cyclic, we are done. If $G$ is not cyclic, then $\psi'(G)>\frac{13}{21}$ and (1) lead to $q=2$. Also, we can assume that $p\geq 3$, i.e. $G$ is not a $2$-group. Then $G$ has a cyclic normal Sylow $r$-subgroup $P$, where either $r=2$ or $r=p$, by Lemma 2.1. Now Lemma 1.4, 4), implies that
\begin{equation}
\frac{13}{21}<\psi'(G)\leq\psi'(P)\psi'(G/P)=\psi'(G/P)\nonumber
\end{equation}and so $G/P$ is nilpotent by the inductive hypothesis.

If $r=2$, then $G$ has a normal $2$-complement $H$ and we infer that
\begin{equation}
G\cong P\times H\cong P\times(G/P)\nonumber
\end{equation}is nilpotent, as desired.

Next assume that $r=p$. Since $\psi'(G)>\frac{13}{21}>\frac{211}{1617}\,$, Theorem C shows that $G$ is solvable\footnote{See Theorem 6 of \cite{6} for an alternative argument.}, and consequently it has a $p$-complement $H$. Also, since $H\cong G/P$ is nilpotent and $\psi'(H)>\frac{13}{21}\,$, by Lemma 2.2 it follows that\newpage
\begin{equation}
H\cong H_1\times C_m,\nonumber
\end{equation}where $m$ is odd and $H_1$ is isomorphic with $Q_8$, $C_2\times C_2$ or $C_{2^{n_1}}$. On the other hand, by Lemma 1.4, 5), we get
\begin{equation}
\psi(G)=|P|\psi(H)+(\psi(P)-|P|)\psi(C_H(P))\nonumber
\end{equation}and so
\begin{equation}
\psi'(G)=\frac{|P|}{\psi(C_{|P|})}\,\psi'(H)+\left(1-\frac{|P|}{\psi(C_{|P|})}\right)\,\frac{\psi(C_H(P))}{\psi(C_{|H|})}\,.
\end{equation}

Obviously, if the semidirect product $G=P\rtimes H$ is trivial, then $G$ is nilpotent. In what follows, we will prove that if the semidirect product $G=P\rtimes H$ is non-trivial, then $\psi'(G)\leq\frac{13}{21}\,$, contradicting our hypothesis.

Since $C_H(P)$ is a proper subgroup of $H$, one obtains
\begin{equation}
\psi(C_H(P))\leq {\rm max}\{\psi(M)\mid M \!=\!\!\mbox{ maximal subgroup of } H\}.
\end{equation}By looking to the structure of maximal subgroups of $H$, we are able to compute the right side of (4). We distinguish the following three cases:
\begin{itemize}
\item[] Case 1. $H_1\cong Q_8$

\hspace{-12mm} We have $\psi'(H)=\frac{27}{43}$ and $\psi(C_H(P))\leq\psi(C_4\times C_m)$. Then
\begin{equation}
\frac{\psi(C_H(P))}{\psi(C_{|H|})}\leq\frac{11}{43}\nonumber
\end{equation}\hspace{-10mm}and (3) leads to
$$\hspace{-5mm}\psi'(G)\leq\frac{|P|}{\psi(C_{|P|})}\,\frac{27}{43}+\left(1-\frac{|P|}{\psi(C_{|P|})}\right)\frac{11}{43}=\frac{11}{43}\,+\frac{16}{43}\,\frac{|P|}{\psi(C_{|P|})}\leq\mbox{ (by Lemma 2.3)}$$
$$\hspace{-65mm}\leq\frac{11}{43}\,+\frac{16}{43}\,\frac{3}{7}\,= \frac{125}{301}<\frac{13}{21}\,.$$
\item[] Case 2. $H_1\cong C_2\times C_2$

\hspace{-12mm} We have $\psi'(H)=\frac{7}{11}$ and $\psi(C_H(P))\leq\psi(C_2\times C_m)$. Then
\begin{equation}
\frac{\psi(C_H(P))}{\psi(C_{|H|})}\leq\frac{3}{11}\nonumber
\end{equation}\hspace{-10mm}and (3) leads to
$$\hspace{-5mm}\psi'(G)\leq\frac{|P|}{\psi(C_{|P|})}\,\frac{7}{11}+\left(1-\frac{|P|}{\psi(C_{|P|})}\right)\frac{3}{11}=\frac{3}{11}\,+\frac{4}{11}\,\frac{|P|}{\psi(C_{|P|})}\leq\mbox{ (by Lemma 2.3)}$$
$$\hspace{-69mm}\leq\frac{3}{11}\,+\frac{4}{11}\,\frac{3}{7}\,= \frac{3}{7}<\frac{13}{21}\,.$$
\item[] Case 3. $H_1\cong C_{2^{n_1}}$

\hspace{-12mm} We have $\psi'(H)=1$ and $\psi(C_H(P))\leq\psi(C_{2^{n_1-1}}\times C_m)$. Then
\begin{equation}
\frac{\psi(C_H(P))}{\psi(C_{|H|})}\leq\frac{2^{2n_1-1}+1}{2^{2n_1+1}+1}\leq\!\footnote{Note that we have equality if and only if $n_1=1$.}\,\frac{1}{3}\nonumber
\end{equation}\hspace{-10mm}and (3) leads to
$$\hspace{-5mm}\psi'(G)\leq\frac{|P|}{\psi(C_{|P|})}\,1+\left(1-\frac{|P|}{\psi(C_{|P|})}\right)\frac{1}{3}=\frac{1}{3}\,+\frac{2}{3}\,\frac{|P|}{\psi(C_{|P|})}\leq\mbox{ (by Lemma 2.3)}$$
$$\hspace{-82mm}\leq\frac{1}{3}\,+\frac{2}{3}\,\frac{3}{7}\,= \frac{13}{21}\,.$$
\end{itemize}

Finally, we remark that we have $\psi'(G)=\frac{13}{21}$ if and only if $|P|=3$ and $n_1=1$, i.e.
\begin{equation}
G\cong C_3\rtimes(C_2\times C_m)\cong (C_3\rtimes C_2)\times C_m\cong S_3\times C_m \mbox{ with } (6,m)=1.\nonumber
\end{equation}The proof of Theorem 1.1 is now complete.$\qed$

\vspace*{3ex}\small

\hfill
\begin{minipage}[t]{5cm}
Marius T\u arn\u auceanu \\
Faculty of  Mathematics \\
``Al.I. Cuza'' University \\
Ia\c si, Romania \\
e-mail: {\tt tarnauc@uaic.ro}
\end{minipage}

\end{document}